\shorttitle{Coalescence times for the BGW process}
\numberwithin{equation}{section}
\begin{document}
    \title{Coalescence times for the \\ Bienaym\'e-Galton-Watson process}

    \author[Universit\'e de Provence]{By V.Le}
\address{
Laboratoire d'Analyse, Topologie, Probabilités (LATP/UMR 7353) \\
Universit\'e de Provence, 39, rue F. Joliot-Curie \\
F-13453 Marseille cedex 13, France\\
Email: levi121286@gmail.com}


\begin{abstract}
We investigate the distribution of the coalescence time (most recent common ancestor) for two individuals picked at random (uniformly) in the current generation of a continuous time Bienaym\'e-Galton-Watson process founded $t$ units of time ago. We also obtain limiting distributions as $t\rightarrow \infty $ in the subcritical case. We may also extend our results for two individuals to the joint distribution of coalescence times for any finite number of individuals sampled in the current generation.
\end{abstract}
\keywords{Bienaym\'e-Galton-Watson process - Discrete state branching process - Coalescence - Quasi-stationary distribution.} 

\ams{60J27}{60J80}

\section{Introduction}
Random trees are mathematical objects that play an important role in many areas of
mathematics and other sciences. One of the most celebrated random trees is the Bienaym\'e-
Galton-Watson (BGW) tree, where the offspring of each vertex of the tree are independent
and indentically distributed (i.i.d) random integers. BGW tree plays a fundamental role in
both the theory and applications of stochastic processes. For more details, see e.g. $[1, 13]$.

One interesting and important approach to random trees is coalescence. In $[7]$, Lambert
has investigated the distribution of coalescence time for two individuals picked at random
(uniformly) in the current generation of a BGW process in the discrete setting. The
purpose of this note is to extend these results of Lambert to the case of continuous time
BGW process. The basic idea is the same as used in Lambert's paper, but we need some other techniques. We start a continuous time BGW process from a number $x$ of individuals at time $0$. Its law is denoted by $\mathbb P_x$ and $\mathbb P_x^{(t)}$ indicates that the current time is time $t$. If the current time contains at least two individuals, we pick uniformly within it two individuals, without replacement. We then compute the distribution of their coalescence time $T$ (if the current time contains less than two individuals, $T$ is set to $\infty $). In the subcritical case, the law $P^{qs}$ denoting the limit of the distributions $\mathbb P_x^{(t)}(\cdot \mid T<\infty )$ as $t\rightarrow \infty $ does not depend on $x$ and is called the quasi-stationary distribution. In section $3$, we specify the law of $T$ under $P^{qs}$. In section $4$, we extend our results to multivariate coalescence when $n$ individuals are sampled at the current time.

In this paper, the Lambert's results are not recalled. The reader should read again $[7]$
to compare the results in the discrete and continuous time cases. We also refer the reader
to several interesting closely related papers $[5, 9, 10, 14, 15]$.
\section{Distribution of the coalescence time}
Let $\mathbb N$ be the set of all natural numbers $\mathbb N = \{0, 1, 2, ...\}$. We consider a continuous time $\mathbb N$-valued branching process $Z=\{Z_t, t\geq 0\}$, where $t$ denotes time. Such a process is a Bienaym\'e-Galton-Watson process in which to each individual is attached a random vector describing its lifetime and its numbers of offspring. We assume that those random vectors are i.i.d.. The rate of reproduction is governed by a finite measure $\mu $ on $\mathbb N$, satisfying $\mu (1)=0$. More precisely, each individual lives for an exponential time with parameter $\mu (\mathbb N)$, and is replaced by a random number of children according to the probability $\mu (\mathbb N)^{-1} \mu $. Hence the dynamics of the continuous time Markov process $Z$ is entirely characterized by the measure $\mu $. For $x\in \mathbb N$, denote by $\mathbb P_x$ the law of $Z$ when $Z_0=x$. We have the following
proposition, which can be seen in $[1]$, chapter III (page $106$).
\begin{proposition}
The generating function of the process $Z$ is given by 
\begin{equation*}
\mathbb E_x(s^{Z_t}) = \psi _t(s)^x, \qquad s\in [0,1], x\in \mathbb N,
\end{equation*}
where 
\begin{equation*}
\frac{\partial \psi _t(s)}{\partial t} = \Phi (\psi _t(s)), \qquad \psi _0(s) =s,
\end{equation*}
and the function $\Phi $ is defined by
\begin{equation*}
\Phi (s) = \sum_{n=0}^\infty  (s^n-s)\mu (n), \quad s\in [0,1].
\end{equation*}
\end{proposition}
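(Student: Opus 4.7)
The plan is to first reduce to the case $x=1$ via the branching property, then derive the ODE by conditioning on the first reproduction event of the ancestor under $\mathbb P_1$.

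For the first identity, under $\mathbb P_x$ the process decomposes as $Z_t = Z_t^{(1)} + \cdots + Z_t^{(x)}$, a sum of $x$ i.i.d.\ copies of $Z$ under $\mathbb P_1$, because the subtrees rooted at the $x$ initial ancestors evolve independently with the same law. Hence $\mathbb E_x(s^{Z_t}) = \mathbb E_1(s^{Z_t})^x$, so it suffices to define $\psi_t(s) := \mathbb E_1(s^{Z_t})$. The initial condition $\psi_0(s) = s$ is immediate since $Z_0 = 1$ under $\mathbb P_1$.

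To obtain the differential equation, I would condition under $\mathbb P_1$ on the lifetime $\tau$ of the ancestor (exponential with parameter $\mu(\mathbb N)$) and on its offspring count. Combining the strong Markov property at time $\tau$ with the branching property just established gives the renewal-type identity
\begin{equation*}
\psi_t(s) = e^{-\mu(\mathbb N)t}\, s + \int_0^t e^{-\mu(\mathbb N)u} \sum_{n=0}^\infty \mu(n)\, \psi_{t-u}(s)^n \, du,
\end{equation*}
where the first term accounts for $\{\tau > t\}$ (ancestor still alive) and the integrand handles $\{\tau \in du\}$ followed by $n$ independent subtrees of age $t-u$. After the substitution $v = t - u$, differentiating in $t$ and using $\sum_n \mu(n) = \mu(\mathbb N)$ to combine terms yields
\begin{equation*}
\frac{\partial \psi_t(s)}{\partial t} = -\mu(\mathbb N)\,\psi_t(s) + \sum_{n=0}^\infty \mu(n)\,\psi_t(s)^n = \Phi(\psi_t(s)),
\end{equation*}
as required.

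The only mildly technical step is justifying differentiation under the integral and the interchange of the sum with the integral. This is routine because $\mu$ is a finite measure and $\psi_t(s) \in [0,1]$, so the integrand and its $t$-derivative are both uniformly dominated; no real obstacle arises.
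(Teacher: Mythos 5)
The paper does not actually prove this proposition; it simply cites Athreya and Ney, Chapter III. Your argument --- reduction to $x=1$ by the branching property, followed by the first-jump (renewal) decomposition at the ancestor's exponential death time and differentiation of the resulting integral equation --- is precisely the standard derivation given in that reference, and it is correct: the substitution $v=t-u$ turns the equation into $\psi_t(s)=e^{-\mu(\mathbb N)t}\bigl[s+\int_0^t e^{\mu(\mathbb N)v}\sum_n\mu(n)\psi_v(s)^n\,dv\bigr]$, whose $t$-derivative is exactly $-\mu(\mathbb N)\psi_t(s)+\sum_n\mu(n)\psi_t(s)^n=\Phi(\psi_t(s))$, and the domination you invoke (finite $\mu$, $\psi_t(s)\in[0,1]$, continuity of $t\mapsto\psi_t(s)$ from the integral equation itself) is enough to justify the interchanges. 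The only point you leave implicit, as does the paper, is possible explosion when $\mu$ has infinite support (one takes $s^{Z_t}=0$ on $\{Z_t=\infty\}$, and the decomposition still holds); this does not affect the validity of the argument.
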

The continuous time BGW process $Z$ is called immortal if $\mu (0)=0$. In this paper, we always assume that $\mu (0)>0$. Let $\eta := \inf \{u>0: \Phi (u)=0\}$. Since $\Phi (0)= \mu (0)>0$, then we have $\eta >0$. Put
\begin{equation*}
F(t) := \int_0^t \frac{du}{\Phi (u)}, \qquad t<\eta .
\end{equation*}
Then the mapping $F: (0,\eta )\rightarrow (0,\infty )$ is bijective. We call $\varphi $ to be its inverse mapping. Moreover, $t\mapsto \psi _t(s)$ is the unique nonnegative solution of the integral equation
\begin{equation*} 
v(t)-\int_0^t \Phi (v(u))du =s, \qquad s\in [0,1], t\geq 0,
\end{equation*}
so that
\begin{equation*} 
\int_s^{\psi _t(s)}\frac{dv}{\Phi (v)}=t, \qquad s\in [0,1],s<\eta , t\geq 0.
\end{equation*}
Hence
\begin{equation*} 
\psi _t(s)=\varphi (t+F(s)), \qquad s\in [0,1],s<\eta , t\geq 0.
\end{equation*}
Note that the branching property implies that $\psi _{t_1+t_2}=\psi _{t_1}\circ \psi _{t_2}$.\\
Now, assume that the current generation is generation $t, t>0$. We consider two individuals $\sigma _1, \sigma _2$ at the present time, and ask when they coalesce, that is, how much time has elapsed since their common ancestor. In a more rigorous way, for $0<u\leq t$, denote by $\tau _u(\sigma _i)$ the (unique) parent of $\sigma _i$ at time $(t-u), i= 1,2$. The coalescence time $T(\sigma _1, \sigma _2)$ of $\sigma _1, \sigma _2$ is uniquely determined by 
$$T(\sigma _1, \sigma _2):= \inf\{u: 0<u\leq t, \tau _u(\sigma _1)=\tau _u(\sigma _2)\},$$
with the convention $\inf \emptyset =\infty $. We denote by $T$ the coalescence time of two individuals picked at random (uniformly) among the individuals which present in the current generation. If the current generation contains less than two individuals, $T$ is set to $\infty $. 

With the notation $\mathbb P^{(t)}$ indicates that $t$ is the current time, the distribution of $T$ is given in the following statement.
\begin{theorem}
For any $0<t_1\leq t_2\leq t, y\geq 1, y\in \mathbb N,$
\begin{equation*} 
\mathbb E^{(t)} (Z_t(Z_t-1)s^{Z_t-2}, T\leq t_1\mid Z_{t-t_2}=y) = y\psi _{t_2}^{'}(s) \psi _{t_2}(s)^{y-1} \frac{\psi _{t_1}^{''}(s)}{\psi _{t_1}^{'}(s)}, \qquad s\in [0,1).
\end{equation*}
The previous p.g.f can be inverted as follow, for any $p\geq 2$
\begin{align*}
\mathbb P^{(t)}(Z_t=p, T\in dt_1\mid Z_{t-t_2}=y)/dt_1 &=\\
y \sum_{n\geq 2} n\mu (n) \mathbb E \Big( \frac{Z_{t_2}^{(1)}(1)Z_{t_1}^{(2)}(n-1)}{p(p-1)}, Z_{t_2}^{(0)}(y-1)+Z_{t_2}^{(1)}(1)&+Z_{t_1}^{(2)}(n-1)=p \Big) ,
\end{align*}
where $Z^{(0)}, Z^{(1)}, Z^{(2)}$ are i.i.d branching processes distributed as $Z$, and the notation $Z_{t_2}^{(0)}(y-1)$ denotes the value taken by $Z^{(0)}$ at time $t_2$ when started at $y-1$.
\end{theorem}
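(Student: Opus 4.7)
The key observation is that two individuals alive at time $t$ satisfy $T \le t_1$ if and only if they share a common ancestor at time $t - t_1$. Let $M := Z_{t-t_1}$ and, given the process up to time $t - t_1$, write $W_1, \dots, W_M$ for the sizes at time $t$ of the sub-populations issued from these $M$ ancestors; by the branching property they are i.i.d.\ copies of $Z_{t_1}$ started from a single individual. The random variable $Z_t(Z_t - 1)\,\mathbf{1}\{T \le t_1\}$, conditioned on the tree, equals the number of ordered pairs at time $t$ with a common ancestor at time $t - t_1$, i.e.\ $\sum_{i=1}^M W_i(W_i - 1)$. The multiplicativity of p.g.f.'s for i.i.d.\ sums therefore yields
\[
\mathbb E\!\left[ Z_t(Z_t-1)\, s^{Z_t-2}\, \mathbf{1}\{T \le t_1\} \mid \mathcal F_{t-t_1}\right]
= s^{-2}\, M\, \mathbb E[W_1(W_1-1)\, s^{W_1}]\, \mathbb E[s^{W_1}]^{M-1}
= M\, \psi_{t_1}''(s)\, \psi_{t_1}(s)^{M-1}.
\]
Under the conditioning $Z_{t-t_2}=y$, $M$ has p.g.f.\ $\psi_{t_2-t_1}(\cdot)^y$, whence $\mathbb E[M\, u^{M-1}] = y\,\psi_{t_2-t_1}(u)^{y-1}\,\psi_{t_2-t_1}'(u)$. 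Setting $u=\psi_{t_1}(s)$ and using $\psi_{t_2}=\psi_{t_2-t_1}\circ\psi_{t_1}$ together with the chain-rule identity $\psi_{t_2-t_1}'(\psi_{t_1}(s))=\psi_{t_2}'(s)/\psi_{t_1}'(s)$ gives exactly the stated formula.

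For the density in $t_1$ I would differentiate this identity in $t_1$. From the semigroup ODE $\partial_t \psi_t(s) = \Phi(\psi_t(s))$, one $s$-derivative yields $\partial_t \log \psi_t'(s) = \Phi'(\psi_t(s))$, and a further $s$-derivative gives
\[
\frac{\partial}{\partial t_1}\frac{\psi_{t_1}''(s)}{\psi_{t_1}'(s)}
= \Phi''(\psi_{t_1}(s))\, \psi_{t_1}'(s)
= \sum_{n \ge 2} n(n-1)\mu(n)\, \psi_{t_1}(s)^{n-2}\, \psi_{t_1}'(s).
\]
Hence the p.g.f.\ of $Z_t(Z_t-1)\,\mathbf{1}\{T\in dt_1\}/dt_1$ equals
\[
y \sum_{n \ge 2} n(n-1)\mu(n)\, \psi_{t_2}'(s)\, \psi_{t_2}(s)^{y-1}\, \psi_{t_1}(s)^{n-2}\, \psi_{t_1}'(s).
\]
I then identify $s\psi_{t_2}'(s)=\mathbb E[Z_{t_2}^{(1)}(1)\, s^{Z_{t_2}^{(1)}(1)}]$, $s(n-1)\psi_{t_1}(s)^{n-2}\psi_{t_1}'(s)=\mathbb E[Z_{t_1}^{(2)}(n-1)\, s^{Z_{t_1}^{(2)}(n-1)}]$, and $\psi_{t_2}(s)^{y-1}=\mathbb E[s^{Z_{t_2}^{(0)}(y-1)}]$; by independence of $Z^{(0)},Z^{(1)},Z^{(2)}$ the three expectations merge into a single one, and the factor $(n-1)$ generated in the second identification cancels the $(n-1)$ in $n(n-1)\mu(n)$. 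The display rewrites as
\[
\frac{y}{s^2}\sum_{n \ge 2} n\mu(n)\, \mathbb E\bigl[Z_{t_2}^{(1)}(1)\, Z_{t_1}^{(2)}(n-1)\, s^{\Sigma}\bigr],\qquad \Sigma := Z_{t_2}^{(0)}(y-1)+Z_{t_2}^{(1)}(1)+Z_{t_1}^{(2)}(n-1).
\]
Matching the coefficient of $s^{p-2}$ on the two sides and dividing by $p(p-1)$ delivers the inversion formula.

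The genuine difficulty I anticipate lies entirely in the bookkeeping of this second step: keeping the powers of $s$, the $(n-1)$ absorbed by the size-biasing, and the combinatorial factor $p(p-1)$ correctly aligned so that $n(n-1)\mu(n)$ reduces exactly to the $n\mu(n)$ announced in the statement. Everything else follows transparently from the branching semigroup, the ODE for $\psi_t$, and the elementary identity $\mathbb E[X s^X]=s\, G'(s)$ for a random variable $X$ with p.g.f.\ $G$.
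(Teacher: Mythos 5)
Your proof is correct and follows essentially the same route as the paper: the inversion step rests on exactly the same p.g.f.\ identity, obtained by differentiating $\psi_{t_1}''(s)/\psi_{t_1}'(s)$ in $t_1$ via the semigroup ODE to get $\Phi''(\psi_{t_1}(s))\psi_{t_1}'(s)$ and by factoring the right-hand side through independence of $Z^{(0)},Z^{(1)},Z^{(2)}$. For the first identity the paper simply defers to the argument of Theorem 1 in $[7]$; your decomposition at time $t-t_1$ into i.i.d.\ subtrees with the pair-counting ratio is precisely that argument, so you have in effect supplied the detail the paper omits.
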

\begin{remark}
When $t_2=t_1$, the above equation can be interpreted as follows. The amount $p$ of population at time $t$ is divided in three parts. An individual is marked at generation $t-t_1$ ($y$ possible choices), which is the candidate for the common ancestor of two random individuals of generation $t$ on $\{T\in dt_1\}$. The first part is the descendance at the current time of the $y-1$ remaining individuals. On $\{T\in dt_1\}$ the marked individual must be replaced immediately by $n$ offspring, $n\geq 2$. Then an individual is marked among the $n$ possible offspring of the previously marked ancestor. The descendance of this individual is the second part, and the descendance of the $n-1$ remaining others is the third part. On $\{T\in dt_1\}$, one of the two individuals sampled must be in the second part, and the other in the third part.
\end{remark}
\begin{proof}
To get the first equation, we use the same argument used in the proof of Theorem
$1$ in $[7]$. The second equation of the theorem is equivalent to
$$\mathbb E^{(t)} (Z_t(Z_t-1)s^{Z_t-2}, T\in  dt_1\mid Z_{t-t_2}=y)/dt_1 =$$
\begin{equation} 
y \sum_{n\geq 2} n\mu (n) \mathbb E \Big( Z_{t_2}^{(1)}(1)Z_{t_1}^{(2)}(n-1) s^{Z_{t_2}^{(0)}(y-1)+Z_{t_2}^{(1)}(1)+Z_{t_1}^{(2)}(n-1)-2} \Big) \quad \forall s\in (0,1). 
\end{equation}
Using the first result of the theorem, the left-hand side of $(2.1)$ equals
\begin{equation*} 
\mathbb E^{(t)} (Z_t(Z_t-1)s^{Z_t-2}, T\in  dt_1\mid Z_{t-t_2}=y)/dt_1 = y\psi _{t_2}^{'}(s) \psi _{t_2}(s)^{y-1} \frac{\partial }{\partial t_1} \Bigg( \frac{\psi _{t_1}^{''}(s)}{\psi _{t_1}^{'}(s)} \Bigg).
\end{equation*}
From the Proposition $1$ we have
\begin{align*}
&\frac{\partial \psi _{t_1}(s)}{\partial t_1} = \Phi (\psi _{t_1}(s)) \\
&\frac{\partial \psi _{t_1}^{'}(s)}{\partial t_1} = \Phi ^{'} (\psi _{t_1}(s)) \psi _{t_1}^{'}(s) \\
&\frac{\partial \psi _{t_1}^{''}(s)}{\partial t_1} = \Phi ^{''} (\psi _{t_1}(s)) \psi _{t_1}^{'}(s)^2+\Phi ^{'} (\psi _{t_1}(s)) \psi _{t_1}^{''}(s),
\end{align*}
so that
$$\frac{\partial }{\partial t_1} \Bigg( \frac{\psi _{t_1}^{''}(s)}{\psi _{t_1}^{'}(s)} \Bigg)= \frac{\psi _{t_1}^{'}(s)\frac{\partial \psi _{t_1}^{''}(s)}{\partial t_1}- \psi _{t_1}^{''}(s)\frac{\partial \psi _{t_1}^{'}(s)}{\partial t_1}}{\psi _{t_1}^{'}(s)^2}= \Phi ^{''} (\psi _{t_1}(s))\psi _{t_1}^{'}(s).$$
Then
\begin{align*}
\mathbb E^{(t)} (Z_t(Z_t-1)s^{Z_t-2}, &T\in  dt_1\mid Z_{t-t_2}=y)/dt_1 \\
&=y\psi _{t_2}^{'}(s) \psi _{t_2}(s)^{y-1} \Phi ^{''} (\psi _{t_1}(s))\psi _{t_1}^{'}(s)\\
&=y\psi _{t_2}^{'}(s) \psi _{t_2}(s)^{y-1} \psi _{t_1}^{'}(s) \sum_{n\geq 2} n(n-1) \mu (n)\psi _{t_1}(s)^{n-2}.
\end{align*}
Finally, the right-hand side of $(2.1)$ equals
\begin{align*}
&y \sum_{n\geq 2} n\mu (n) \mathbb E (s^{Z_{t_2}^{(0)}(y-1)}) \mathbb E ( Z_{t_2}^{(1)}(1) s^{Z_{t_2}^{(1)}(1)-1}) \mathbb E ( Z_{t_1}^{(2)}(n-1) s^{Z_{t_1}^{(2)}(n-1)-1} ) \\
&= y \sum_{n\geq 2} n\mu (n) \mathbb E_{y-1} (s^{Z_{t_2}}) \mathbb E_1 ( Z_{t_2} s^{Z_{t_2}-1}) \mathbb E_{n-1} ( Z_{t_1} s^{Z_{t_1}-1} ) \\
&= y \sum_{n\geq 2} n\mu (n) \psi _{t_2}(s)^{y-1} \psi _{t_2}^{'}(s) (n-1) \psi _{t_1}(s)^{n-2} \psi _{t_1}^{'}(s),
\end{align*}
which ends the proof.
\end{proof} 
\begin{corollary}
For any $0<t_1\leq t$,
$$\mathbb P_x^{(t)} (T\leq t_1)= x \int_0^1 ds(1-s)\frac{\psi _{t_1}^{''}(s)}{\psi _{t_1}^{'}(s)}\psi _{t}^{'}(s)\psi _{t}(s)^{x-1}.$$
In particular,\\
$\mathbb P_x^{(t)}$ (At least two extant individuals, a random pair has no common ancestor) =
$$x(x-1) \int_0^1 ds(1-s) \psi _{t}^{'}(s)^2\psi _{t}(s)^{x-2}.$$
\end{corollary}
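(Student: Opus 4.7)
The plan is to reduce both identities to the generating-function expression already given by the preceding theorem (respectively, to a direct branching decomposition), and then to extract the unweighted probability via the elementary identity
$$\int_0^1 (1-s)\, s^{p-2}\, ds = \frac{1}{p(p-1)}, \qquad p \geq 2.$$

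For the first formula, I would specialize the theorem to $t_2 = t$ and $y = x$ (so the conditioning $\{Z_{t-t_2}=y\}$ just fixes $Z_0 = x$), obtaining
$$\mathbb E_x^{(t)}\bigl(Z_t(Z_t-1)\, s^{Z_t-2},\, T \leq t_1\bigr) = x\, \psi_t'(s)\, \psi_t(s)^{x-1}\, \frac{\psi_{t_1}''(s)}{\psi_{t_1}'(s)}.$$
The left-hand side is $\sum_{p\geq 2} p(p-1)\, s^{p-2}\, \mathbb P_x^{(t)}(Z_t = p, T \leq t_1)$. Multiplying both sides by $(1-s)$, integrating over $s\in(0,1)$, and exchanging sum and integral by Tonelli, each weight $p(p-1)\int_0^1(1-s)s^{p-2}\,ds$ collapses to $1$, so the series telescopes to $\sum_{p\geq 2} \mathbb P_x^{(t)}(Z_t = p, T \leq t_1) = \mathbb P_x^{(t)}(T \leq t_1)$, since the event $\{T < \infty\}$ forces $Z_t \geq 2$.

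For the ``in particular'' statement I would exploit the branching/independence structure directly. Writing $Z_t^{(1)}, \ldots, Z_t^{(x)}$ for the sizes at time $t$ of the $x$ i.i.d.\ subtrees rooted at the initial ancestors, the event ``the sampled pair has no common ancestor'' coincides with ``the two sampled individuals lie in distinct subtrees''; its conditional probability given $(Z_t^{(i)})_i$ is $\sum_{i\neq j} Z_t^{(i)} Z_t^{(j)}/(Z_t(Z_t-1))$ on $\{Z_t\geq 2\}$. Hence
$$\mathbb P_x^{(t)}\bigl(Z_t \geq 2,\, \text{no common ancestor}\bigr) = \mathbb E_x\!\left[ \mathbf{1}_{\{Z_t \geq 2\}}\, \frac{\sum_{i\neq j} Z_t^{(i)} Z_t^{(j)}}{Z_t(Z_t-1)} \right].$$
Invoking the same integral identity, exchanging expectation and integral by Tonelli, and using independence with $\mathbb E(Z_t^{(i)} s^{Z_t^{(i)}-1}) = \psi_t'(s)$ and $\mathbb E(s^{Z_t^{(k)}}) = \psi_t(s)$, each of the $x(x-1)$ ordered pairs contributes $\psi_t'(s)^2\, \psi_t(s)^{x-2}$, and the stated formula follows.

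Nothing here is genuinely deep; the point requiring care is the Fubini/Tonelli exchange together with the absence of a singularity at $s = 0$. In both displays the potentially ``bad'' indices ($p \in \{0,1\}$, respectively ordered pairs with $Z_t^{(i)} Z_t^{(j)} = 0$) are annihilated by the prefactor preceding $s^{p-2}$, so the integrands are nonnegative and integrable on $(0,1)$. As a sanity check, the second identity can equally be derived from the first alone via $\mathbb P_x^{(t)}(Z_t \geq 2, T = \infty) = \mathbb P_x(Z_t \geq 2) - \mathbb P_x^{(t)}(T \leq t)$, integrating $(1-s)\,(\psi_t(s)^x)''$ by parts and using $\psi_t(1) = 1$ together with $\mathbb P_x(Z_t \geq 2) = 1 - \psi_t(0)^x - x\, \psi_t(0)^{x-1}\, \psi_t'(0)$; this provides an independent check of the formula.
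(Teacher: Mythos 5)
Your argument is correct and is exactly the intended one: the paper itself only cites the proof of Corollary 1 in Lambert's article, which proceeds precisely as you do, specializing the theorem to $t_2=t$, multiplying by $(1-s)$ and integrating with $\int_0^1(1-s)s^{p-2}\,ds=1/(p(p-1))$, and handling the ``no common ancestor'' event via the decomposition into the $x$ independent subtrees (equivalently, via the difference $\mathbb P_x(Z_t\geq 2)-\mathbb P_x^{(t)}(T\leq t)$, as in your sanity check). Nothing further is needed.
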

\begin{proof}
See the proof of the corollary $1$ in $[7]$.
\end{proof}
\section{Quasi-stationary distribution}
In this section, we consider the limiting distribution of the coalescence time when the process is conditioned on $\{Z_t\geq 2\}$ and $t\rightarrow \infty $. Informally, this limit embodies the situation where the genealogy was founded a long time ago and is still not extinct, with at least two descendants at the present time. We will need some results on quasi-stationary distributions for the continuous time BGW process, which can be found in $[1, 4, 17]$. The reader may see more general results on quasi-stationary distributions, which have been obtained for continuous time Markov chains by $[16]$ and for semi-Markov processes by $[3]$. We also refer the reader to $[2, 8, 11]$ for the results on quasi-stationary distributions for population processes.

We consider the case $\psi _1^{'}(1)= \mathbb E_1(Z_1)<1$ (subcritical case) when $\mathbb E_1(Z_1 \log(Z_1))<\infty $. According to Theorem $6$ in $[17]$, there is a nonnegative sequence $(\alpha _k, k\geq 1)$ summing to $1$ such that
\begin{equation}
\lim_{t\rightarrow \infty } \mathbb P_x(Z_t=j\mid Z_t>0) = \alpha _j, \qquad \forall x\in \mathbb N, j\geq 1.
\end{equation}
The sequence $(\alpha _k, k\geq 1)$ is called the Yaglom limit of the process $Z$. If we define 
$$g(s)= \sum_{k\geq 1}\alpha _k s^k, \qquad s\in [0,1],$$
then $(3.1)$ deduces
\begin{equation*}
g(s)= \lim_{t\rightarrow \infty } \mathbb E_x (s^{Z_t} \mid Z_t>0) = \lim_{t\rightarrow \infty } \frac{\psi _t(s)-\psi _t(0)}{1- \psi _t(0)}, \qquad s\in [0,1].
\end{equation*}
We have the result:
\begin{proposition}
In the subcritical case when $\mathbb E_1(Z_1 \log(Z_1))<\infty $, we have for any $s \in  [0, 1]$,
\begin{equation}
\lim_{t\rightarrow \infty } \mathbb E_x (Z_t s^{Z_t-1} \mid Z_t>0) = g^{'}(s)\leq g^{'}(1)<\infty .
\end{equation}
\end{proposition}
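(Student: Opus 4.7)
The plan is to express the conditional expectation as an explicit ratio via the branching property, and then reduce the claim to proving $\psi_t'(s)/(1-\psi_t(0))\to g'(s)$, which I will deduce from the already established convergence of $(\psi_t(s)-\psi_t(0))/(1-\psi_t(0))$ to $g(s)$ by a convexity argument.

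Differentiating $\mathbb E_x(s^{Z_t})=\psi_t(s)^x$ in $s$ gives $\mathbb E_x(Z_ts^{Z_t-1})=x\psi_t(s)^{x-1}\psi_t'(s)$, while $\mathbb P_x(Z_t>0)=1-\psi_t(0)^x$. Since $Z_ts^{Z_t-1}=0$ on $\{Z_t=0\}$,
$$\mathbb E_x(Z_ts^{Z_t-1}\mid Z_t>0)=\frac{x\psi_t(s)^{x-1}\psi_t'(s)}{1-\psi_t(0)^x}.$$
Factorising $1-\psi_t(0)^x=(1-\psi_t(0))\sum_{k=0}^{x-1}\psi_t(0)^k$ and noting that in the subcritical regime $\psi_t(s),\psi_t(0)\to 1$ as $t\to\infty$, the prefactor $x\psi_t(s)^{x-1}/\sum_{k=0}^{x-1}\psi_t(0)^k$ tends to $1$. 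The proof thus reduces to establishing
$$\lim_{t\to\infty}\frac{\psi_t'(s)}{1-\psi_t(0)}=g'(s),\qquad s\in[0,1].$$

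For this I would set $H_t(s):=(\psi_t(s)-\psi_t(0))/(1-\psi_t(0))$, a convex function of $s$ on $[0,1]$ (being a positive affine transform of the convex map $\psi_t$), with $H_t\to g$ pointwise by definition of the Yaglom limit, and $H_t'(s)=\psi_t'(s)/(1-\psi_t(0))$. Since $g$ is analytic on $[0,1)$, the classical lemma that pointwise convergence of convex functions forces convergence of derivatives at every differentiability point of the limit yields $H_t'(s)\to g'(s)$ for all $s\in[0,1)$. For the endpoint $s=1$, the monotonicity of $H_t'$ in $s$, together with the continuous-time Heathcote--Seneta--Vere-Jones theorem recalled in $[1,17]$ (which under $\mathbb E_1(Z_1\log Z_1)<\infty$ ensures that $(1-\psi_t(0))/\psi_t'(1)$ has a positive finite limit, so that $H_t'(1)$ stays bounded), justifies interchanging the limits $t\to\infty$ and $s\uparrow 1$ to obtain $H_t'(1)\to g'(1)$.

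The inequality $g'(s)\le g'(1)$ is then immediate since $g'(s)=\sum_k k\alpha_k s^{k-1}$ is non-decreasing in $s$, and the finiteness $g'(1)=\sum_k k\alpha_k<\infty$ is exactly the finite-mean property of the Yaglom distribution, which is the content of the $X\log X$ version of the Heathcote--Seneta--Vere-Jones result. The principal technical obstacle will be the rigorous appeal to the convex-function derivative lemma and, in particular, the treatment of the endpoint $s=1$: without the $\mathbb E_1(Z_1\log Z_1)<\infty$ hypothesis, $g'(1)$ can be infinite and the asymptotic $1-\psi_t(0)\asymp\psi_t'(1)=e^{\Phi'(1)t}$ may fail, so this moment condition is precisely what makes the proof go through.
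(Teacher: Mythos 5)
Your route is genuinely different from the paper's. The paper does not actually prove Proposition 2 in the text: it refers the reader to Athreya--Ney ([1], Ch.~IV, p.~170) for the general statement, and then supplies an elementary proof of only the endpoint case $s=1$ (that $\lim_t\mathbb E_x(Z_t\mid Z_t>0)=g'(1)=1/\chi(0)<\infty$) under the stronger hypothesis $\mathbb E_1(Z_1^2)<\infty$, via the Joffe-style product representation $\chi(0)=\lim_n\prod_{k=0}^{n-1}\bigl[1-\epsilon_1(\psi_k(0))/\psi_1'(1)\bigr]$ and the identity $g(\psi_t(0))=1-\psi_t'(1)$. Your reduction of the conditional expectation to $\psi_t'(s)/(1-\psi_t(0))\to g'(s)$, and the convexity argument for $s\in[0,1)$ (pointwise convergence of the convex functions $H_t$ to $g$ forces convergence of derivatives at every interior point where $g$ is differentiable), is correct and self-contained; it buys an actual proof of the interior case that the paper delegates entirely to a citation. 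At $s=0$ you can bypass the boundary issue by noting directly that $H_t'(0)=\mathbb P_1(Z_t=1\mid Z_t>0)\to\alpha_1=g'(0)$.

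The one step I would not accept as written is the endpoint $s=1$. Monotonicity of $H_t'$ in $s$ together with boundedness of $H_t'(1)$ yields only the lower bound: for $s<1$, $g'(s)=\lim_tH_t'(s)\le\liminf_tH_t'(1)$, and letting $s\uparrow1$ gives $g'(1)\le\liminf_tH_t'(1)$. It does not give the reverse inequality --- convexity bounds $H_t'(1)$ from below by difference quotients, not from above, and in general a sequence of probability generating functions can converge together with all derivatives on $[0,1)$ while the means converge to something strictly larger than the mean of the limit (first-moment mass escaping to infinity). So ``boundedness justifies interchanging the limits'' is not a valid inference. What closes the gap is the full strength of the Heathcote--Seneta--Vere-Jones statement you cite: not merely that $(1-\psi_t(0))/\psi_t'(1)$ has a positive finite limit $\chi(0)$, but that $\chi(0)=1/g'(1)$ --- which is exactly the content of the paper's Lemma 2, proved there from $g(\psi_t(0))=1-\psi_t'(1)$. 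If you invoke that identification explicitly, your proof is complete; as phrased, the $s=1$ case rests on an interchange of limits that your stated hypotheses do not support.
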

The proof of Proposition $2$ can be found in $[1]$, chapter IV (page $170$). Under more
restrictive hypothesis that $\mathbb E_1(Z_1^2)<\infty $, we can give a very elementary and interesting
proof of $(3.2)$, which is provided by two following lemmas.
\begin{lemma}
For $t\geq 0$, let $\epsilon _t(s)$ be the function defined by
\begin{equation}
\frac{1-\psi _t(s)}{1- s}= \psi _t^{'}(1)- \epsilon _t(s), \qquad s\in [0,1).
\end{equation}
Then $\epsilon _t(s)$ is monotone decreasing, tend to zero when $s$ tend to one.
\end{lemma}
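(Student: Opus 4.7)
The plan is to rewrite the quotient $\dfrac{1-\psi_t(s)}{1-s}$ in probabilistic form via the elementary identity $1-s^n = (1-s)(1+s+\cdots+s^{n-1})$. Since $\psi_t(s)=\mathbb{E}_1(s^{Z_t})$ and $\psi_t(1)=1$, this rewriting (with the convention that the empty sum equals zero when $Z_t=0$) yields
$$\frac{1-\psi_t(s)}{1-s} \;=\; \mathbb{E}_1\!\left(\frac{1-s^{Z_t}}{1-s}\right) \;=\; \mathbb{E}_1\!\left(\sum_{k=0}^{Z_t-1} s^k\right).$$
This single representation will deliver both claims of the lemma.

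For monotonicity, I would note that for every sample point $\omega$, the integrand $s\mapsto\sum_{k=0}^{Z_t(\omega)-1}s^k$ is non-decreasing on $[0,1)$ (each $s^k$ is), so the expectation inherits this property. Hence $\dfrac{1-\psi_t(s)}{1-s}$ is non-decreasing in $s$, and therefore $\epsilon_t(s)=\psi_t'(1)-\dfrac{1-\psi_t(s)}{1-s}$ is non-increasing in $s$, as claimed.

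For the limit as $s\uparrow 1$, I would apply monotone convergence to the same integrand, using $\sum_{k=0}^{Z_t-1}s^k \uparrow Z_t$ pointwise, to conclude
$$\lim_{s\uparrow 1}\frac{1-\psi_t(s)}{1-s} \;=\; \mathbb{E}_1(Z_t) \;=\; \psi_t'(1),$$
whence $\epsilon_t(s)\to 0$.

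The main obstacle: there is essentially none of substance; both statements reduce to monotone convergence applied to the representation above. The one subtlety worth flagging is that $\psi_t'(1)$ must be finite for the decomposition $\dfrac{1-\psi_t(s)}{1-s} = \psi_t'(1)-\epsilon_t(s)$ to make sense, and this is guaranteed by the ambient subcritical hypothesis $\mathbb{E}_1(Z_1^2)<\infty$, which gives $\psi_t'(1)=e^{t\Phi'(1)}<\infty$.
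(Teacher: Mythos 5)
Your proof is correct. The paper disposes of this lemma in one line by appealing to the fact that $\psi_t$ is increasing, convex, and satisfies $\psi_t(1)=1$: for such a function the difference quotient $\frac{\psi_t(1)-\psi_t(s)}{1-s}$ is non-decreasing in $s$ and increases to the left derivative $\psi_t'(1^-)$, which is exactly the content of the lemma. You instead make this explicit for a probability generating function by writing
$\frac{1-\psi_t(s)}{1-s}=\mathbb{E}_1\bigl(\sum_{k=0}^{Z_t-1}s^k\bigr)$
via the geometric-sum identity, reading off monotonicity termwise and obtaining the limit $\mathbb{E}_1(Z_t)=\psi_t'(1)$ by monotone convergence. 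The two arguments are the same mathematics; yours is a self-contained, elementary verification that avoids invoking the general convexity fact and has the small added virtue of identifying the limit directly as $\mathbb{E}_1(Z_t)$, while the paper's is shorter but leaves the reader to recall the standard property of convex difference quotients. Your remark on the finiteness of $\psi_t'(1)$ is apposite, though subcriticality ($\mathbb{E}_1(Z_1)<1$) already suffices for that; the stronger second-moment hypothesis is only needed later, for the bound $\epsilon_1(s)<C(1-s)$ in Lemma 2.
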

\begin{proof}
It follows from the fact that, for each $t$, $\psi _t(s)$ is increasing, convex, and $\psi _t(1)=1$.
\end{proof}
The equality $(3.3)$ is equivalent to
\begin{equation}
\frac{1-\psi _t(s)}{(1- s) \psi _t^{'}(1)}= 1- \frac{\epsilon _t(s)}{\psi _t^{'}(1)}.
\end{equation}
Replacing $s$ by $\psi _h(s)$ in $(3.4)$ we obtain
\begin{equation*}
\frac{1-\psi _t(\psi _h(s))}{(1- \psi _h(s)) \psi _t^{'}(1)}= 1- \frac{\epsilon _t(\psi _h(s))}{\psi _t^{'}(1)}\leq 1, \qquad t,h>0.
\end{equation*}
Note that $\psi _{t+h}(s) = \psi _t(\psi _h(s))$, and $\psi _{t+h}^{'}(1)= \psi _{t}^{'}(1)\psi _{h}^{'}(1)$, then
\begin{equation*}
\frac{1-\psi _{t+h}(s)}{(1-s) \psi _{t+h}^{'}(1)}= \frac{1-\psi _t(\psi _h(s))}{(1- \psi _h(s)) \psi _t^{'}(1)} \frac{1- \psi _h(s)}{(1-s) \psi _{h}^{'}(1)}\leq \frac{1- \psi _h(s)}{(1-s) \psi _{h}^{'}(1)}, \qquad t,h>0.
\end{equation*}
This implies that the sequence $(1- \psi _t(s))/((1-s) \psi _{t}^{'}(1))$ is monotone decreasing in $t$ and thus converges to a function $\chi (s)$. Letting $s=0$ we have 
\begin{equation*}
\chi (0)= \lim_{t\rightarrow \infty } \frac{\mathbb P_1(Z_t>0)}{\psi _t^{'}(1)}\geq 0.
\end{equation*}
\begin{lemma}
$\chi (0)$ is positive and for all $x\in \mathbb N$
 \begin{equation*}
\lim_{t\rightarrow \infty } \mathbb E_x(Z_t\mid Z_t>0) = g^{'}(1)= \frac 1{\chi (0)}.
\end{equation*}
\end{lemma}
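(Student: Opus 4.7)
The plan is to compute $\mathbb{E}_x(Z_t\mid Z_t>0)$ explicitly via Proposition 1, reduce the task to showing $\chi(0)>0$, establish positivity through an elementary ODE argument that exploits the second-moment hypothesis, and finally identify $1/\chi(0)$ with $g'(1)$ by a uniform-integrability argument.

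First, Proposition 1 gives $\mathbb{E}_x(Z_t)=x\psi_t'(1)$ and $\mathbb{P}_x(Z_t>0)=1-\psi_t(0)^x$, so that
$$\mathbb{E}_x(Z_t\mid Z_t>0)=\frac{x\psi_t'(1)}{1-\psi_t(0)^x}=\frac{\psi_t'(1)}{1-\psi_t(0)}\cdot\frac{x(1-\psi_t(0))}{1-\psi_t(0)^x}.$$
Since the subcritical assumption together with $\mu(0)>0$ forces $\psi_t(0)\to 1$, the factor $x(1-\psi_t(0))/(1-\psi_t(0)^x)=x/\sum_{k=0}^{x-1}\psi_t(0)^k$ tends to $1$, so the whole statement reduces to proving that $\chi(0)>0$ and that $\psi_t'(1)/(1-\psi_t(0))\to 1/\chi(0)$.

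For positivity, I would work with $v(t):=1-\psi_t(0)$, which satisfies the ODE $v'(t)=-\Phi(1-v(t))$ by Proposition 1. The hypothesis $\mathbb{E}_1(Z_1^2)<\infty$ is equivalent to $\Phi''(1)<\infty$, which yields the Taylor expansion $\Phi(1-v)=\lambda v+\tfrac{1}{2}\Phi''(1)v^2+o(v^2)$ near $v=0$, with $\lambda:=-\Phi'(1)>0$. A crude comparison gives exponential decay $v(t)\leq Ce^{-ct}$ for some $c>0$, so the function $\lambda-\Phi(1-v(t))/v(t)$ is $O(v(t))$ and hence integrable on $[0,\infty)$. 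Integrating $(\log v)'=-\Phi(1-v)/v$ then shows that $\log v(t)+\lambda t=\log\bigl(v(t)/\psi_t'(1)\bigr)$ converges to a finite limit, which yields $\chi(0)=\lim_t v(t)/\psi_t'(1)\in(0,\infty)$.

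To identify $1/\chi(0)$ with $g'(1)$, Fatou's lemma applied to the Yaglom limit $(\alpha_k)$ immediately yields $g'(1)=\sum_{k\geq1}k\alpha_k\leq\lim_t\mathbb{E}_x(Z_t\mid Z_t>0)=1/\chi(0)<\infty$. The reverse inequality, which I expect to be the main obstacle, requires uniform integrability of the conditioned family $\{Z_t\mid Z_t>0\}_{t\geq 0}$. Under the $\mathbb{E}_1(Z_1^2)<\infty$ hypothesis, differentiating $\psi_t$ twice at $1$ and solving the resulting linear ODE shows $\mathbb{E}_x(Z_t^2)=O(e^{-\lambda t})$, while the positivity of $\chi(0)$ just proved gives $\mathbb{P}_x(Z_t>0)\sim x\chi(0)e^{-\lambda t}$; combining the two yields $\sup_t\mathbb{E}_x(Z_t^2\mid Z_t>0)<\infty$, which supplies the needed uniform integrability and hence the equality $g'(1)=1/\chi(0)$.
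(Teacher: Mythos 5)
Your proposal is correct, but it reaches the two conclusions ($\chi(0)>0$ and $g'(1)=1/\chi(0)$) by genuinely different means than the paper. For positivity, the paper follows Joffe's discrete-time argument: it restricts to integer times, writes $\chi(0)=\lim_n\prod_{k=0}^{n-1}\bigl[1-\epsilon_1(\psi_k(0))/\psi_1'(1)\bigr]$, and reduces positivity to summability of $\epsilon_1(\psi_k(0))$, which it gets from the two bounds $\psi_k(0)\geq 1-\psi_k'(1)$ and $\epsilon_1(s)<C(1-s)$ (the latter being where $\mathbb E_1(Z_1^2)<\infty$ enters). You instead exploit the continuous-time structure directly, integrating $(\log v)'=-\Phi(1-v)/v=-\lambda+O(v)$ along the flow $v(t)=1-\psi_t(0)$; this is arguably the more natural argument here, since it avoids the discretization entirely, and your supporting claims check out (convexity of $\Phi$ gives $\Phi(1-v)\geq\lambda v$, hence the exponential decay you need, and $\Phi''(1)<\infty$ gives the $O(v)$ error). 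For the identification $g'(1)=1/\chi(0)$, the paper uses the composition rule to derive the exact identity $g(\psi_t(0))=1-\psi_t'(1)$ and then takes a difference quotient as $t\to\infty$ --- an argument that needs nothing beyond $\chi(0)>0$ and the definition of $g$ --- whereas you invoke Fatou plus uniform integrability of the conditioned family, obtained from $\mathbb E_x(Z_t^2)=O(e^{-\lambda t})$ and $\mathbb P_x(Z_t>0)\asymp e^{-\lambda t}$. Your route spends the second-moment hypothesis a second time where the paper does not, but since the whole lemma is stated under $\mathbb E_1(Z_1^2)<\infty$ this costs nothing, and your UI argument has the side benefit of directly justifying the convergence of conditional means $\mathbb E_x(Z_t\mid Z_t>0)\to\sum_k k\alpha_k$, which the paper's difference-quotient computation only yields implicitly.
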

\begin{proof}
We will follow the proof idea of Joffe as given in $[6]$. Note that
\begin{align*}
\chi (0)= \lim_{t\rightarrow \infty } \frac{1-\psi _t(0)}{\psi _t^{'}(1)}&=  \lim_{n\rightarrow \infty, n\in \mathbb N } \frac{1-\psi _n(0)}{\psi _n^{'}(1)}\\
&= \lim_{n\rightarrow \infty } \prod_{k=0}^{n-1} \big[ 1- \frac{\epsilon _1(\psi _k(0))}{\psi _1^{'}(1)}\big].
\end{align*}
Hence it follows that $\chi (0)>0$ if and only if the series $\sum_{k=0}^{\infty } \epsilon _1(\psi _k(0))$ converges. Since $\epsilon _t(s)\geq 0$ we get
\begin{equation*}
\frac{1-\psi _t(s)}{1- s}\leq  \psi _t^{'}(1), \qquad t\geq 0, s\in [0,1).
\end{equation*}
Letting $s=0$ we obtain
$$\psi _t(0)\geq 1- \psi _t^{'}(1), \qquad t\geq 0,$$
\begin{equation}
\epsilon _1(\psi _k(0))\leq \epsilon _1( 1- \psi _k^{'}(1)), \qquad k\geq 0.
\end{equation}
In the other hand, $\mathbb E_1(Z_1^2)<\infty $ implies that $\psi _1^{''}(1)<\infty $, then there exists a constant $C>0$ such that 
\begin{equation}
\epsilon _1(s)< C (1-s), \qquad s\in [0,1).
\end{equation}
From $(3.5)$ and $(3.6)$ we deduce that the series $\sum_{k=0}^{\infty } \epsilon _1(\psi _k(0))$ converges, so that $\chi (0)>0$. This implies that $\psi _t(0)\rightarrow 1$ as $t\rightarrow \infty $. Therefore
\begin{align*}
g(\psi _t(0)) &= \lim_{h\rightarrow \infty } \frac{\psi _{t+h}(0)- \psi _h(0)}{1-\psi _h(0)}\\
&= \lim_{h\rightarrow \infty } \frac{-(1-\psi _{t+h}(0))+(1- \psi _h(0))}{1-\psi _h(0)}\\
&= \frac{- \psi _{t+h}^{'}(1)+ \psi _{h}^{'}(1)}{\psi _{h}^{'}(1)}\\
&= -\psi _{t}^{'}(1)+1.
\end{align*}
Thus
\begin{equation*}
g^{'}(1)= \lim_{t\rightarrow \infty }\frac{g(\psi _t(0))-1}{\psi _t(0)-1}= \lim_{t\rightarrow \infty } \frac{-\psi _{t}^{'}(1)}{\psi _t(0)-1}= \frac 1{\chi (0)}.
\end{equation*}
\end{proof}
Denote by $\tilde{Z} $ the limiting value of $Z_t$ conditioned on $\{Z_t\geq 2\}$ as $t\rightarrow \infty $. We have
\begin{theorem}
In the subcritical case when $\mathbb E_1(Z_1 \log(Z_1))<\infty $, the quasi-stationary distribution $\mathbb P^{qs}$ of $T$ and $\tilde{Z} $ is defined by
\begin{equation*}
\mathbb P^{qs}(\tilde{Z}=p, T\in d h )= \lim_{t\rightarrow \infty } \mathbb P_x^{(t)} (Z_t=p, T\in d h\mid Z_t\geq 2), \qquad p\geq 2, h>0.
\end{equation*}
Then $\mathbb P^{qs}$ defines an probability distribution which does not depend on $x$ and satisfies
\begin{equation*}
\mathbb E^{qs} (\tilde{Z}(\tilde{Z}-1)s^{\tilde{Z}-2}, T\leq h)= \frac{g^{'}(s)}{1-g^{'}(0)} \frac{\psi _h^{''}(s)}{\psi _h^{'}(s)}.
\end{equation*}
In particular,
\begin{equation*}
\mathbb P^{qs} (T\leq h)= \frac 1{1-g^{'}(0)} \int_0^1 ds (1-s) \frac{\psi _h^{''}(s)}{\psi _h^{'}(s)} g^{'}(s).
\end{equation*} 
\end{theorem}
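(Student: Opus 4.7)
The plan is to derive the claimed identity by taking the $t\rightarrow\infty$ limit in Theorem 1. First, I would specialize Theorem 1 to $t_2=t$ and $y=x$, so that the conditioning $\{Z_{t-t_2}=y\}$ reduces to the starting condition $Z_0=x$; this yields
$$\mathbb{E}_x^{(t)}(Z_t(Z_t-1)s^{Z_t-2},\,T\leq h) = x\psi_t'(s)\psi_t(s)^{x-1}\frac{\psi_h''(s)}{\psi_h'(s)}.$$
Since $\{T\leq h\}\subset\{Z_t\geq 2\}$, dividing by $\mathbb{P}_x(Z_t\geq 2)$ gives the conditional generating function whose limit is sought. The factor $\psi_h''(s)/\psi_h'(s)$ does not depend on $t$, so everything reduces to understanding the ratio $x\psi_t'(s)\psi_t(s)^{x-1}/\mathbb{P}_x(Z_t\geq 2)$.

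I would then split this ratio as
$$\frac{x\psi_t'(s)\psi_t(s)^{x-1}}{\mathbb{P}_x(Z_t>0)}\,\cdot\,\frac{\mathbb{P}_x(Z_t>0)}{\mathbb{P}_x(Z_t\geq 2)}.$$
The first factor is precisely $\mathbb{E}_x(Z_t s^{Z_t-1}\mid Z_t>0)$ (differentiating $\mathbb{E}_x(s^{Z_t})=\psi_t(s)^x$ with respect to $s$), so by Proposition 2 it converges to $g'(s)$. The second factor equals $1/(1-\mathbb{P}_x(Z_t=1\mid Z_t>0))$, and by the Yaglom limit (3.1) we have $\mathbb{P}_x(Z_t=1\mid Z_t>0)\rightarrow\alpha_1=g'(0)$. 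Multiplying the two limits produces $g'(s)/(1-g'(0))$ independently of $x$, which when combined with the $t$-free factor gives the announced generating-function formula. The independence of $x$ in the limit is precisely the asserted independence of $\mathbb{P}^{qs}$ from the starting value, and convergence of this bivariate generating function for every $s\in[0,1)$ and $h>0$ yields convergence of the joint law of $(Z_t, T)$ conditioned on $\{Z_t\geq 2\}$ by matching power-series coefficients in $s$.

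Finally, the marginal formula for $\mathbb{P}^{qs}(T\leq h)$ follows exactly as in Corollary 1: integrating the identity against $(1-s)\,ds$ on $[0,1]$ and using $\int_0^1(1-s)s^{p-2}\,ds=1/(p(p-1))$ for $p\geq 2$ collapses the weight $\tilde{Z}(\tilde{Z}-1)s^{\tilde{Z}-2}$ into the indicator of $\{\tilde{Z}\geq 2\}$. The main technical subtlety I anticipate is confirming that $\mathbb{P}^{qs}$ is a genuine probability distribution (no mass escaping to infinity as $t\rightarrow\infty$); this is ensured by $g'(1)<\infty$ under the standing hypothesis $\mathbb{E}_1(Z_1\log Z_1)<\infty$ (see Proposition 2, and Lemma 2 under the stronger $\mathbb{E}_1(Z_1^2)<\infty$), which is exactly what makes the limiting joint law sum to one.
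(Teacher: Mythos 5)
Your argument is correct and is essentially the intended one: the paper gives no proof of this theorem beyond citing Lambert [7], and your derivation (specializing Theorem 1 to $t_2=t$, $y=x$, splitting the normalization as $\mathbb{E}_x(Z_ts^{Z_t-1}\mid Z_t>0)\cdot\mathbb{P}_x(Z_t>0)/\mathbb{P}_x(Z_t\geq 2)$, then invoking Proposition 2 and the Yaglom limit with $\alpha_1=g'(0)$) is exactly the continuous-time transcription of that proof. The coefficient-matching step and the $\int_0^1(1-s)s^{p-2}\,ds=1/(p(p-1))$ integration for the marginal are both sound.
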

\begin{proof}
See the proof of Theorem $2$ in $[7]$.
\end{proof}
\section{Multivariate coalescence}
Assume that the current generation contains at least $n+1$ individuals, $n\geq 1$. We will present the distribution of coalescence times, when $n+1$ individuals are sampled uniformly and independently at the current time $t$. For $k = 1, 2,..., n$, we denote by $T_k$ the coalescence time of the first individual and the $(k +1)$-th individual, and by $T_k^*$ the $k$-th coalescence time. We have
\begin{theorem}
For any $0 < t_1 < t_2 < ... < t_n \leq  t$, the joint distribution of coalescence times $T_k$ is given by
\begin{align*}
&\mathbb E_x^{(t)}(Z_t(Z_t-1)...(Z_t-n) s^{Z_t-n-1}, T_1\in dt_1, ..., T_n\in dt_n)/ dt_1...dt_n= \\
& x\psi _t^{'}(s) \psi _t(s)^{x-1} \prod_{i=1}^n \psi _{t_i}^{'}(s) \big[ \sum_{k\geq 2} k(k-1)\mu (k) \psi _{t_i}(s)^{k-2}\big], \quad s\in [0,1).
\end{align*}
\end{theorem}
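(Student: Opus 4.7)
The plan is induction on $n$, taking Theorem 1 (in the differentiated form already obtained in its proof) as the base case and the Markov/branching decomposition at the oldest coalescence time as the engine for the inductive step. For $n=1$, setting $t_2=t$ and $y=x$ in Theorem 1 and invoking the identity $\partial_{t_1}(\psi_{t_1}''/\psi_{t_1}')=\Phi''(\psi_{t_1}(s))\psi_{t_1}'(s)=\psi_{t_1}'(s)\sum_{k\geq 2}k(k-1)\mu(k)\psi_{t_1}(s)^{k-2}$ established in that proof reproduces exactly the claim.

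For the inductive step, I assume the formula when $n$ individuals are sampled (i.e., for $n-1$ coalescence times), and consider $n+1$ sampled individuals with $0<t_1<\cdots<t_n$. I condition on $y:=Z_{t-t_n}$ and decompose the event $\{T_1\in dt_1,\ldots,T_n\in dt_n\}$ according to the oldest split. Under this conditioning, the MRCA of the $n+1$ sampled individuals is one of the $y$ individuals alive at time $t-t_n$; on $\{T_n\in dt_n\}$ this MRCA reproduces immediately into $k\geq 2$ offspring with density $\mu(k)$; one of the $k$ offspring is marked as the ancestor of individual $n+1$ and one of the remaining $k-1$ as the MRCA of $\{\sigma_1,\ldots,\sigma_n\}$. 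By the strong branching property the contributions of the four resulting sub-trees are independent: the $y-1$ other individuals at time $t-t_n$ contribute $\psi_{t_n}(s)^{y-1}$; the $k-2$ unmarked offspring contribute $\psi_{t_n}(s)^{k-2}$; the single-ancestor sub-tree leading to individual $n+1$ contributes $\mathbb E_1(Z_{t_n}s^{Z_{t_n}-1})=\psi_{t_n}'(s)$; and the single-ancestor sub-tree carrying $\sigma_1,\ldots,\sigma_n$, run for effective current time $t_n$ from one ancestor, contributes by the inductive hypothesis
\[
\psi_{t_n}'(s)\prod_{i=1}^{n-1}\psi_{t_i}'(s)\sum_{k\geq 2}k(k-1)\mu(k)\psi_{t_i}(s)^{k-2}.
\]
Summing the reproduction density against $k(k-1)\psi_{t_n}(s)^{k-2}$ supplies the $i=n$ factor, and one obtains the conditional formula
\[
\mathbb E^{(t)}\bigl(\,\cdot\,\bigm|\,Z_{t-t_n}=y\bigr)/dt_1\cdots dt_n \;=\; y\,\psi_{t_n}'(s)\,\psi_{t_n}(s)^{y-1}\prod_{i=1}^n \psi_{t_i}'(s)\sum_{k\geq 2}k(k-1)\mu(k)\psi_{t_i}(s)^{k-2}.
\]

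To remove the conditioning I average over $y=Z_{t-t_n}$ under $\mathbb P_x$: using the semigroup identities $\psi_{t-t_n}\circ\psi_{t_n}=\psi_t$ and $\psi_t'(s)=\psi_{t-t_n}'(\psi_{t_n}(s))\psi_{t_n}'(s)$, one finds $\mathbb E_x[Z_{t-t_n}\psi_{t_n}(s)^{Z_{t-t_n}-1}]=x\psi_t(s)^{x-1}\psi_t'(s)/\psi_{t_n}'(s)$, so that the residual $\psi_{t_n}'(s)$ cancels and the prefactor becomes $x\psi_t'(s)\psi_t(s)^{x-1}$, as required. The main obstacle is the combinatorial bookkeeping at the $t_n$-split: correctly matching the descending-factorial weight $Z_t(Z_t-1)\cdots(Z_t-n)$ on the left-hand side with the factors produced by choosing the MRCA, distinguishing the two marked offspring, selecting one descendant in each marked sub-tree, and inheriting the factorial count from the inductive hypothesis, and justifying the independence of the four sub-trees via the strong branching property at the reproduction event. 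Once this accounting is made rigorous, the remaining steps are routine generating-function manipulations parallel to those used in the proof of Theorem 1.
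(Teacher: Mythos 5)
Your proposal is correct and follows essentially the same route as the paper: induction on $n$, conditioning on $Z_{t-t_n}$, a branching decomposition at the oldest split into independent sub-trees with the inductive hypothesis applied to the sub-population carrying the first $n$ sampled individuals, and deconditioning via $\mathbb E_x\big(Z_{t-t_n}\psi _{t_n}(s)^{Z_{t-t_n}-1}\big)$. The only cosmetic differences are that the paper applies the induction hypothesis directly to the $(k-1)$-founder sub-population rather than first extracting the factor $k-1$ for the choice of the single offspring ancestral to $\sigma _1,\dots ,\sigma _n$ (which, strictly speaking, is their common ancestor at time $t-t_n$ rather than their MRCA), and that you make explicit the final integration step that the paper leaves implicit.
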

\begin{proof}
We will prove this theorem by induction since the formula holds when $n=1$ by Theorem $1$. We first condition on $\{Z_{t-t_n}= y\}$. We apply the second formula of Theorem $1$ to the last coalescence time $T_n$,
\begin{align*}
&\mathbb P^{(t)}(Z_t=p, T_1\in dt_1, ..., T_n\in dt_n\mid Z_{t-t_n}= y)/ dt_n= y \sum_{k\geq 2} k\mu (k)\times \\
&\mathbb E \Big( \frac{Z_{t_n}^{(1)}(1)Z_{t_n}^{(2)}(k-1)..(Z_{t_n}^{(2)}(k-1)-n+1)}{p(p-1)...(p-n)}, Z_{t_n}^{(0)}(y-1)+Z_{t_n}^{(1)}(1)+Z_{t_n}^{(2)}(k-1)=p, \\
& \qquad \qquad \qquad \qquad \qquad \qquad \qquad \qquad \qquad \qquad \qquad \qquad \qquad \qquad T_i\in dt_i, i\leq n-1 \Big),
\end{align*}
where the interpretation is as for $n=1$ (see Remark $1$): $y$ corresponds to the choice of the common ancestor of all individuals in generation $t-t_n$, $k$ is the number of offspring this ancestor had instantaneously at time $t-T_n$ and corresponds to the choice of the ancestor of the last individual within this offspring. The $n$ remaining individuals have to be found in the descendance of the $k-1$ remaining offspring. Then
\begin{align*}
&\mathbb E^{(t)}(Z_t(Z_t-1)...(Z_t-n) s^{Z_t-n-1}, T_1\in dt_1, ..., T_n\in dt_n\mid Z_{t-t_n}= y)/ dt_n= y \sum_{k\geq 2} k\mu (k)\times \\
&\mathbb E \Big( Z_{t_n}^{(1)}(1)Z_{t_n}^{(2)}(k-1)...(Z_{t_n}^{(2)}(k-1)-n+1) s^{Z_{t_n}^{(0)}(y-1)+Z_{t_n}^{(1)}(1)+Z_{t_n}^{(2)}(k-1)-n-1}, T_i\in dt_i, i\leq n-1 \Big) \\
&= y \sum_{k\geq 2} k\mu (k) \mathbb E\big(s^{Z_{t_n}^{(0)}(y-1)}\big) \mathbb E\big(Z_{t_n}^{(1)}(1) s^{Z_{t_n}^{(1)}(1)-1}\big)\times \\
& \mathbb E\big(Z_{t_n}^{(2)}(k-1)...(Z_{t_n}^{(2)}(k-1)-n+1)s^{Z_{t_n}^{(2)}(k-1)-n}, T_i\in dt_i, i\leq n-1\big)\\
&= y \psi _{t_n}(s)^{y-1} \psi _{t_n}^{'}(s)\sum_{k\geq 2} k\mu (k)\times \\
&\mathbb E\big(Z_{t_n}^{(2)}(k-1)...(Z_{t_n}^{(2)}(k-1)-n+1)s^{Z_{t_n}^{(2)}(k-1)-n}, T_i\in dt_i, i\leq n-1\big).
\end{align*}
By the induction hypothesis, the last expression equals
\begin{align*}
&y \psi _{t_n}(s)^{y-1} \psi _{t_n}^{'}(s)\sum_{k\geq 2} k\mu (k)\times  \\
&(k-1) \psi _{t_n}^{'}(s) \psi _{t_n}(s)^{k-2} \prod_{i=1}^{n-1} \psi _{t_i}^{'}(s) \big[ \sum_{j\geq 2} j(j-1)\mu (j) \psi _{t_i}(s)^{j-2}\big] dt_1...dt_{n-1}\\
&= y \psi _{t_n}(s)^{y-1} \psi _{t_n}^{'}(s) \prod_{i=1}^n \psi _{t_i}^{'}(s) \big[ \sum_{k\geq 2} k(k-1)\mu (k) \psi _{t_i}(s)^{k-2}\big] dt_1...dt_{n-1}.
\end{align*}
Hence the result follows by integrating w.r.t. to the distribution of $Z_{t-t_n}$ conditional on $\{Z_0=x\}$.
\end{proof}
\begin{theorem}
For any $0 < t_1 < t_2 < ... < t_n \leq  t$, the joint distribution of coalescence times $T_k^*$ is given by
\begin{align*}
&\mathbb E_x^{(t)}(Z_t(Z_t-1)...(Z_t-n) s^{Z_t-n-1}, T_1^*\in dt_1, ..., T_n^*\in dt_n)/ dt_1...dt_n= \\
& \frac{n! (n+1)!}{2^n} x\psi _t^{'}(s) \psi _t(s)^{x-1} \prod_{i=1}^n \psi _{t_i}^{'}(s) \big[ \sum_{k\geq 2} k(k-1)\mu (k) \psi _{t_i}(s)^{k-2}\big], \quad s\in [0,1).
\end{align*}
\end{theorem}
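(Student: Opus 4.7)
The plan is to establish, by induction on $n$, the following stronger statement: for every specific labeled ranked binary tree $\tau$ on $n+1$ leaves, with internal-node times $(t_1,\ldots,t_n)$ listed in increasing order of rank, the ``density per labeled tree''
$$
\eta_\tau(t_1,\ldots,t_n)\,dt_1\cdots dt_n:=\mathbb{E}_x^{(t)}\!\Bigl[s^{Z_t-n-1}\!\!\sum_{(\sigma_1,\ldots,\sigma_{n+1})}\!\!\mathbf{1}\bigl\{\mathrm{tree}(\sigma_1,\ldots,\sigma_{n+1})=\tau,\ T_k^*\in dt_k\,\forall k\bigr\}\Bigr]
$$
(sum over ordered tuples of distinct individuals at time $t$) agrees with the Theorem~$3$ expression $F(t_1,\ldots,t_n)=x\psi_t'(s)\psi_t(s)^{x-1}\prod_{i=1}^{n}\psi_{t_i}'(s)\Phi''(\psi_{t_i}(s))$, independently of $\tau$. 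Theorem~$4$ would then drop out at once, since its left-hand side equals $\sum_\tau\eta_\tau$ and the number of labeled ranked binary trees on $n+1$ leaves is $\prod_{k=2}^{n+1}\binom{k}{2}=\tfrac{(n+1)!\,n!}{2^n}$.

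The base case $n=1$ is exactly Theorem~$1$ (a single labeled ranked tree with two leaves). For the inductive step, fix $\tau$ whose root coalescence at $t_n$ splits the $n+1$ labels into two sides of sizes $j$ and $n+1-j$, carrying sub-trees $\tau_A,\tau_B$ with sub-times indexed by a partition $S_A\sqcup S_B=\{1,\ldots,n-1\}$ with $|S_A|=j-1$. Following the same decomposition as in the proof of Theorem~$3$, I condition on $Z_{t-t_n}=y$: a factor $y$ picks the MRCA, a factor $\mu(k)$ gives the offspring count $k\geq 2$, a factor $k(k-1)$ enumerates the ordered pairs of offspring carrying the two sub-samples (events for distinct pairs are disjoint by exchangeability of offspring), and the unused $y-1+k-2$ individuals contribute the moment $\psi_{t_n}(s)^{y+k-3}$. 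Applying the induction hypothesis to each side yields $\eta_{\tau_A}\,\eta_{\tau_B}=[\psi_{t_n}'(s)]^2\prod_{i=1}^{n-1}\psi_{t_i}'(s)\Phi''(\psi_{t_i}(s))$.

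One then sums $k\geq 2$ against $k(k-1)\mu(k)\psi_{t_n}(s)^{k-2}$ to get $\Phi''(\psi_{t_n}(s))$, and sums $y$ against the chain-rule identity $\mathbb{E}_x[Z_{t-t_n}\psi_{t_n}(s)^{Z_{t-t_n}-1}]=x\psi_t(s)^{x-1}\psi_t'(s)/\psi_{t_n}'(s)$ (a direct consequence of the branching identity $\psi_t=\psi_{t-t_n}\circ\psi_{t_n}$ recalled after the Proposition in Section~$2$), which absorbs one of the two copies of $\psi_{t_n}'(s)$. The expression then collapses to $\eta_\tau=F(t_1,\ldots,t_n)$, with no residual dependence on the split size $j$ or on the sub-shapes $\tau_A,\tau_B$.

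The main obstacle is precisely verifying this ``$j$-invisibility'' of the decomposition: one must check that the two copies of $\psi_{t_n}'(s)$ produced by the two inductive applications combine cleanly with the chain-rule denominator and with the $\Phi''(\psi_{t_n}(s))$ from the MRCA branching to regenerate exactly one $\psi_{t_n}'(s)\Phi''(\psi_{t_n}(s))$ in the $t_n$ slot of $\prod_{i=1}^n$. Once this is done, summing the $\tau$-independent identity $\eta_\tau=F$ over all $\tfrac{(n+1)!\,n!}{2^n}$ labeled ranked binary trees produces the prefactor in the statement.
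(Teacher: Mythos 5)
Your argument is correct, and it reorganizes the combinatorics in a genuinely different way from the paper, even though the analytic core is identical. The paper also conditions on $Z_{t-t_n}=y$ and decomposes at the root branching event, but its induction hypothesis is Theorem $4$ itself: at each step it sums over the size $i$ of the group descending from one distinguished offspring and over which $i-1$ of the earlier ranked times fall inside that group, applies the inductive prefactors $\tfrac{(i-1)!\,i!}{2^{i-1}}$ and $\tfrac{(n-i)!\,(n-i+1)!}{2^{n-i}}$ to the two groups, and then evaluates the identity $\tfrac 12\sum_{i=1}^{n}\binom{n+1}{i}\binom{n-1}{i-1}\tfrac{(i-1)!\,i!\,(n-i)!\,(n-i+1)!}{2^{n-1}}=\tfrac{n!\,(n+1)!}{2^{n}}$ at every induction step. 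You instead strengthen the induction hypothesis to the statement that the density per labeled ranked topology is topology-independent and equal to the Theorem $3$ expression; this makes each inductive step combinatorics-free (the two sub-trees are fixed by $\tau$, and your chain-rule identity together with $\sum_k k(k-1)\mu(k)\psi_{t_n}(s)^{k-2}=\Phi''(\psi_{t_n}(s))$ regenerates exactly one factor $\psi_{t_n}'(s)\Phi''(\psi_{t_n}(s))$, as you note), and it defers the entire prefactor to a single count $\prod_{k=2}^{n+1}\binom{k}{2}=\tfrac{(n+1)!\,n!}{2^n}$ of labeled histories at the very end. What you gain is a structural statement the paper does not make explicit --- given the ranked times, all labeled ranked topologies are equally likely, with Theorem $3$ recovered as the caterpillar case --- and a transparent origin for the constant $\tfrac{n!(n+1)!}{2^n}$. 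What the paper's version avoids, and what you should make explicit if you write this up, are two small points: the reduction of the left-hand side to $\sum_\tau\eta_\tau$ via the factorial-moment weighting of ordered samples, and the justification of the $k(k-1)$ ordered-pair count, namely that at \emph{distinct} times $t_1<\cdots<t_n$ the sample necessarily splits over exactly two offspring of the MRCA (configurations spread over three or more offspring force two of the $T_i^*$ to coincide and so live on the excluded diagonal); the paper's bookkeeping with $Z^{(1)}_{t_n}(1)$ versus $Z^{(2)}_{t_n}(k-1)$ sidesteps the latter by lumping the $k-1$ remaining offspring together.
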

\begin{proof}
The proof is similar to that of Theorem $3$ above. We reason by induction since the
formula holds when $n = 1$ by Theorem $1$. We first condition on $\{Z_{t-t_n}=y\}$ and apply the
second formula of Theorem $1$ to the last coalescence time $T_n^*$,
\begin{align*}
&\mathbb P^{(t)}(Z_t=p, T_1^*\in dt_1, ..., T_n^*\in dt_n\mid Z_{t-t_n}= y)/ dt_n= \frac 12 y \sum_{k\geq 2} k\mu (k) \sum_{i=1}^n {n+1 \choose i} \sum_{1\leq j_1<..<j_{i-1}\leq n-1}\\
&\mathbb E \Big( \frac{Z_{t_n}^{(1)}(1)..(Z_{t_n}^{(1)}(1)-i+1)Z_{t_n}^{(2)}(k-1)..(Z_{t_n}^{(2)}(k-1)-n+i)}{p(p-1)...(p-n)}, Z_{t_n}^{(0)}(y-1)+Z_{t_n}^{(1)}(1)+Z_{t_n}^{(2)}(k-1) \\
& \quad =p, T_h^*(i)\in dt_h \text{ for } h\in \{j_1, .., j_{i-1}\} \text{ and } T_h^*(n+1-i) \in dt_h \text{ for } h\not\in \{j_1, .., j_{i-1}\}, h\leq n-1\Big),
\end{align*}
where the interpretation is as follows: $y$ corresponds to the choice of the common ancestor
of all individuals in generation $t- t_n$, $k$ is the number of offspring this ancestor had
instantaneously at time $t-T_n^*$ and corresponds to the choice of the ancestor of the last $i$
individuals within this offspring (there are ${n+1 \choose i}$ possible choices for the last $i$ individuals).
The $n + 1 - i$ remaining individuals have to be found in the descendance of the $k -1$ remaining offspring. For $m = 1, .., i - 1, T_{j_m}(i)$ is the $m$-th coalescence time of the last $i$ individuals, and for $h\not\in \{j_1, .., j_{i-1}\}, h\leq n-1$, $T_h^*(n+1-i)$ is a coalescence time of the $n + 1 -i$ remaining individuals. And we have to divide the expression by 2 because each
sample has been counted twice. We then have
\begin{align*}
&\mathbb E^{(t)}(Z_t(Z_t-1)...(Z_t-n) s^{Z_t-n-1}, T_1^*\in dt_1, ..., T_n^*\in dt_n\mid Z_{t-t_n}= y)/ dt_n\\
&=\frac 12 y \sum_{k\geq 2} k\mu (k) \sum_{i=1}^n {n+1 \choose i} \sum_{1\leq j_1<..<j_{i-1}\leq n-1}\\
&\mathbb E \Big( Z_{t_n}^{(1)}(1)..(Z_{t_n}^{(1)}(1)-i+1)Z_{t_n}^{(2)}(k-1)..(Z_{t_n}^{(2)}(k-1)-n+i) s^{Z_{t_n}^{(0)}(y-1)+Z_{t_n}^{(1)}(1)+Z_{t_n}^{(2)}(k-1)-n-1}, \\
& \quad T_h^*(i)\in dt_h \text{ for } h\in \{j_1, .., j_{i-1}\} \text{ and } T_h^*(n+1-i) \in dt_h \text{ for } h\not\in \{j_1, .., j_{i-1}\}, h\leq n-1\Big) \\
&=\frac 12 y \sum_{k\geq 2} k\mu (k) \sum_{i=1}^n {n+1 \choose i} \sum_{1\leq j_1<..<j_{i-1}\leq n-1} \mathbb E\big(s^{Z_{t_n}^{(0)}(y-1)}\big) \times \\
&\mathbb E\big(Z_{t_n}^{(1)}(1)..(Z_{t_n}^{(1)}(1)-i+1) s^{Z_{t_n}^{(1)}(1)-i}, T_h^*(i)\in dt_h \text{ for } h\in \{j_1, .., j_{i-1}\}\big)\times \\
& \mathbb E\big(Z_{t_n}^{(2)}(k-1)..(Z_{t_n}^{(2)}(k-1)-n+i)s^{Z_{t_n}^{(2)}(k-1)-n+i-1}, T_h^*(n+1-i) \in dt_h \\
&\qquad \qquad \qquad \qquad \qquad \qquad \qquad \qquad \qquad \qquad \qquad \qquad\text{ for } h\not\in \{j_1, .., j_{i-1}\}, h\leq n-1\big).
\end{align*}
By the induction hypothesis, the last expression equals
\begin{align*}
&\frac 12 y \sum_{k\geq 2} k\mu (k) \sum_{i=1}^n {n+1 \choose i} \sum_{1\leq j_1<..<j_{i-1}\leq n-1} \psi _{t_n}(s)^{y-1}\times \\
&\frac{(i-1)! i!}{2^{i-1}} \psi _{t_n}^{'}(s) \prod_{h\in \{j_1, .., j_{i-1}\}} \psi _{t_h}^{'}(s) \big[ \sum_{j\geq 2} j(j-1)\mu (j) \psi _{t_h}(s)^{j-2}\big]\times \\
& \frac{(n-i)! (n-i+1)!}{2^{n-i}} (k-1)\psi _{t_n}^{'}(s) \psi _{t_n}(s)^{k-2}\prod_{1\leq h\leq  n-1, h\not\in \{j_1, .., j_{i-1}\}} \psi _{t_h}^{'}(s) \big[ \sum_{j\geq 2} j(j-1)\mu (j) \psi _{t_h}(s)^{j-2}\big] \\
&\qquad \qquad \qquad \qquad \qquad \qquad \qquad \qquad \qquad \qquad \qquad \qquad \qquad \qquad \qquad \qquad dt_1 dt_2 .. dt_{n-1}\\
&=\frac 12 y \sum_{k\geq 2} k\mu (k) \sum_{i=1}^n {n+1 \choose i} \sum_{1\leq j_1<..<j_{i-1}\leq n-1} \frac{(i-1)! i!(n-i)! (n-i+1)!}{2^{n-1}} \psi _{t_n}^{'}(s)\psi _{t_n}(s)^{y-1}\times \\
&(k-1)\psi _{t_n}^{'}(s) \psi _{t_n}(s)^{k-2}\prod_{h=1}^{n-1} \psi _{t_h}^{'}(s) \big[ \sum_{j\geq 2} j(j-1)\mu (j) \psi _{t_h}(s)^{j-2}\big] dt_1 dt_2 .. dt_{n-1}\\
&=\frac 12 y \sum_{k\geq 2} k\mu (k) \sum_{i=1}^n {n+1 \choose i} {n-1 \choose i-1} \frac{(i-1)! i!(n-i)! (n-i+1)!}{2^{n-1}} \psi _{t_n}^{'}(s)\psi _{t_n}(s)^{y-1}\times \\
&(k-1)\psi _{t_n}^{'}(s) \psi _{t_n}(s)^{k-2}\prod_{h=1}^{n-1} \psi _{t_h}^{'}(s) \big[ \sum_{j\geq 2} j(j-1)\mu (j) \psi _{t_h}(s)^{j-2}\big] dt_1 dt_2 .. dt_{n-1}\\
&\frac{n! (n+1)!}{2^{n}} y \psi _{t_n}^{'}(s) \psi _{t_n}(s)^{y-1} \prod_{h=1}^{n} \psi _{t_h}^{'}(s) \big[ \sum_{j\geq 2} j(j-1)\mu (j) \psi _{t_h}(s)^{j-2}\big] dt_1 dt_2 .. dt_{n-1}.
\end{align*}
Hence the result follows by integrating w.r.t. to the distribution of $Z_{t-t_n}$ conditional on $\{Z_0 = x\}$.
\end{proof}
$\textbf{Acknowledgment}$. I am deeply grateful to my thesis advisor Professor Etienne Pardoux
for useful suggestions and constant encouragement during the preparation of this work. I would like also to thank the referee for a careful reading of the paper, as well as many valuable comments.
{}

 \end{document}